\theoremstyle{plain}
\newtheorem{theorem}{Theorem}
\newtheorem{claim}[theorem]{Claim}
\theoremstyle{definition}
\theoremstyle{remark}
\newtheorem{remark}[theorem]{Remark}
\newcommand{\B}{\mathcal{B}}
\newcommand{\cu}{C_{\rm up}}
\newcommand{\cd}{C_{\rm down}}
\newcommand{\NN}{\mathcal{N}}
\newcommand{\La}{{\rm La}}
\newcommand{\F}{{\mathcal F}}
\newcommand{\symmdiff}{\bigtriangleup}
\title{\bf A note on the size of $\NN$-free families}
\author{Ryan R. Martin\thanks{This work was supported by a grant from the Simons Foundation (\#353292, Ryan R. Martin).}\\
\small Department of Mathematics \\[-0.8ex]
\small Iowa State University\\[-0.8ex] 
\small 396 Carver Hall\\[-0.8ex] 
\small Ames, Iowa, U.S.A.\\
\small\tt rymartin@iastate.edu\\
\and
 Shanise Walker \\
\small Department of Mathematics \\[-0.8ex]
\small Iowa State University\\[-0.8ex] 
\small 396 Carver Hall\\[-0.8ex] 
\small Ames, Iowa, U.S.A.\\
\small\tt shanise1@iastate.edu\\
}
\date{\dateline{February 21, 2017}{}\\
\small Mathematics Subject Classifications: 06A06}
\begin{document}

\maketitle

\begin{abstract}
  The $\NN$ poset consists of  four distinct sets $W,X,Y,Z$ such that $W\subset X$, $Y\subset X$, and $Y\subset Z$ where $W$ is not necessarily a subset of $Z$. A family $\F$, considered as a subposet of the $n$-dimensional Boolean lattice  $\B_n$,  is $\NN$-free if it does not contain $\NN$ as a subposet.   Let $\La(n, \NN)$ be the size of a largest $\NN$-free family  in $\B_n$. Katona and Tarj\'{a}n proved that $\La(n,\NN)\geq {n \choose k}+A(n,4,k+1)$, where $k=\lfloor n/2\rfloor$ and $A(n, 4, k+1)$ is the size of a single-error-correcting code with constant weight $k+1$. In this note, we prove for $n$ even and $k=n/2$, $\La(n, \NN) \geq {n\choose k}+A(n, 4, k)$, which improves the bound on $\La(n, \NN)$ in the second order term for some values of $n$ and should be an improvement for an infinite family of values of $n$, depending on the behavior of the function $A(n,4,\cdot)$.

  % keywords are optional
  \bigskip\noindent \textbf{Keywords:} forbidden subposets, error-correcting codes
\end{abstract}

\section{Introduction}

The \emph{$n$-dimensional Boolean lattice}, $\B_n$, denotes the partially ordered set (poset) $(2^{[n]}, \subseteq)$, where $[n]=\{1,\ldots,n\}$ and, for every finite set $S$, $2^S$ denotes the set of subsets of $S$. 
For posets, $P=(P, \preceq)$ and $P^{\prime}=(P^{\prime}, \preceq)$, we say $P^{\prime}$ is a  \emph{(weak) subposet} of $P$ if there exists an injection $f:P^{\prime}\rightarrow P$ that preserves the partial ordering. That is, whenever $u\leq^{\prime} v$ in $P^{\prime}$, we have $f(u)\leq f(v)$ in $P$.  If $\F$ is a subposet of $\B_n$ such that $\F$ contains no subposet $P$, we say $\F$ is \emph{$P$-free}. 

$P$-free posets (or $P$-free families) have been extensively studied, beginning with Sperner's theorem in 1928. Sperner~\cite{Sperner} proved that the size of the largest antichain in $\B_n$ is ${n \choose {\lfloor n/2\rfloor}}$. Erd\H{o}s~\cite{Erdos} generalized this result to chains. Katona and Tarj\'{a}n~\cite{KatonaTarjan} addressed the problem of ${\mathcal V}$-free families and got an asymptotic result.  Griggs and Katona~\cite{GriggsKatona} addressed $\NN$-free families, obtaining Theorem~\ref{thm:CD:JGGK} below. See Griggs and Li~\cite{GriggsLi} for a survey of the progress on $P$-free families.  Let $\La(n, P)$ denote the size of the largest $P$-free family in $\B_n$.

The main result of this note is Theorem~\ref{thm:CD:lowerboundnfree}, in which, for some values of $n$, we improve the bounds on $\La(n,\NN)$ in the second-order term. The poset $\NN$ consists of four distinct sets $W,X,Y,Z$ such that $W\subset X$, $Y\subset X$, and $Y\subset Z$. However, $W$ is not necessarily a subset of $Z$.  See Figure~\ref{fig:npic}. The earliest extremal result on $\NN$-free families is Theorem~\ref{thm:CD:JGGK}.

\tikzstyle{vertex}=[circle, draw, inner sep=0pt, minimum size=6pt]
\tikzstyle{rvertex}=[circle, red, fill, draw, inner sep=0pt, minimum size=6pt]
\tikzstyle{gvertex}=[circle, green, fill, draw, inner sep=0pt, minimum size=6pt]
\tikzstyle{bvertex}=[circle, blue, fill, draw, inner sep=0pt, minimum size=6pt]
\tikzstyle{Bvertex}=[circle, black, fill, draw, inner sep=0pt, minimum size=6pt]
\tikzstyle{pvertex}=[circle, purp, fill, draw, inner sep=0pt, minimum size=6pt]
\tikzstyle{overtex}=[circle, orange, fill, draw, inner sep=0pt, minimum size=6pt]
\newcommand{\vertex}{\node[vertex]}
\newcommand{\rvertex}{\node[rvertex]}
\newcommand{\gvertex}{\node[gvertex]}
\newcommand{\bvertex}{\node[bvertex]}
\newcommand{\Bvertex}{\node[Bvertex]}
\newcommand{\pvertex}{\node[pvertex]}

\begin{figure}[ht]\center
	\begin{tikzpicture}[scale=1.2]
		\Bvertex (1) at (0,0) [label=left:W]{};
		\Bvertex (2) at (0, 1) [label=left:X]{};
		\Bvertex (3) at (1, 0) [label=right:Y]{};
		\Bvertex (4) at (1,1) [label=right:Z]{};
		\draw (1) to (2);
		\draw (2) to (3);
		\draw (3) to (4);
	\end{tikzpicture}
	\caption{The $\NN$ poset.} \label{fig:npic}
\end{figure}

\begin{theorem}[Griggs and Katona~\cite{GriggsKatona}]\label{thm:CD:JGGK}
\[ {n \choose {\lfloor n/2\rfloor}}\left(1+\frac{1}{n}+\Omega\left(\frac{1}{n^2}\right)\right)\leq \La(n,\NN)\leq {n \choose {\lfloor n/2\rfloor}}\left(1+\frac{2}{n}+O\left(\frac{1}{n^2}\right)\right).\]

\end{theorem}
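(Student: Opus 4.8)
This statement packages a lower and an upper bound, and I would attack them by completely different means: the lower bound by exhibiting one good family, the upper bound by a global counting argument over all $\NN$-free families. Throughout write $k=\lfloor n/2\rfloor$ and let $L_j=\binom{[n]}{j}$ denote the $j$-th level of $\B_n$.

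For the lower bound the plan is to take the middle level together with a sparse, well-separated family one level up. Concretely, set
\[
\F = L_k \cup \Bigl\{\,S\in L_{k+1} : \textstyle\sum_{i\in S} i \equiv 0 \pmod n\,\Bigr\}.
\]
The congruence condition is a Graham--Sloane-type device: if two $(k+1)$-sets shared a common $k$-subset they would differ by swapping a single element $i$ for an element $j$, changing the coordinate sum by $j-i\not\equiv 0\pmod n$, so no two chosen $(k+1)$-sets share a $k$-subset. Since all of $L_k$ is present, the only way an $\NN$ could arise is from a set $Y\in L_k$ having two supersets $X,Z$ in the family, which forces $X,Z$ to share the $k$-subset $Y$; the separation property rules this out, so $\F$ is $\NN$-free. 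A pigeonhole/averaging count shows the best residue class has at least $\tfrac1n\binom{n}{k+1}$ members, which is $\Theta\!\left(\tfrac1n\binom nk\right)$, giving $|\F|\geq\binom nk\bigl(1+\Omega(1/n)\bigr)$ of the required shape; squeezing out the exact first-order constant and the positive second-order term calls for optimizing the modulus, and for the sharpest version supplementing with a separated family one level \emph{down} via a more careful joint construction.

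For the upper bound I would argue that every $\NN$-free family is essentially the middle level plus a packing, and then control the packing. The first step is a Lubell-type averaging: choosing a uniformly random maximal chain $\mathcal{C}$, the expected number of family members on it is $\sum_{F\in\F}\binom{n}{|F|}^{-1}$, and I would try to use a normalized-matching (compression) argument to assume the mass of $\F$ concentrates on two consecutive levels near the middle without decreasing $|\F|$. The second, decisive step is structural: the definition of $\NN$ forbids a set $Y$ from having two supersets in $\F$ while one of those supersets simultaneously has a second subset in $\F$. This severely limits how the sets lying above the middle level can attach to it—heuristically each such set must ``own'' a private $k$-subset—so their number is controlled by the packing bound $\binom nk/(k+1)\sim\tfrac2n\binom nk$, and assembling the two steps yields $|\F|\le \binom nk\bigl(1+2/n+O(1/n^2)\bigr)$.

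The step I expect to be the genuine obstacle is the upper bound, on two counts. First, reducing a general $\NN$-free family—whose sets may be spread across many levels—to the two-level picture requires a compression/normalized-matching argument that does not lose more than the allowed $O(1/n^2)$ factor, and the structural dichotomy above must be stated carefully to accommodate chains (where the ``second subset'' and the ``second superset'' can coincide). Second, extracting the exact constant $2$ in the $2/n$ term is delicate: it is precisely the packing bound for how many pairwise $k$-subset-disjoint $(k+1)$-sets can sit above $L_k$, and matching it from below only yields the constant $1$, which is exactly why the theorem leaves a factor-$2$ gap in the second-order term rather than an asymptotic equality.
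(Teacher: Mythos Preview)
This theorem is stated in the paper only as a citation to Griggs and Katona~\cite{GriggsKatona}; the paper neither gives nor sketches a proof of either inequality, so there is no in-paper argument to compare your proposal against. The only thing the paper says about it is that the lower-bound construction is the Katona--Tarj\'an family of Theorem~\ref{thm:CD:lowerboundKT}.

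On your proposal itself: your lower-bound family is exactly that Katona--Tarj\'an construction (the full middle level together with a Graham--Sloane code one level above), and your verification that it is $\NN$-free is correct---indeed the family is even $\mathcal V$-free, which is how Katona and Tarj\'an originally used it. The pigeonhole count gives $\binom{n}{k}+\tfrac{1}{n}\binom{n}{k+1}$, which secures the $1/n$ main term; note, though, that for $n$ odd this equals $\binom{n}{k}(1+1/n)$ exactly, and for $n$ even the second-order correction from $\binom{n}{k+1}/\binom{n}{k}=n/(n+2)$ is \emph{negative}, so obtaining a genuinely positive $\Omega(1/n^2)$ remainder needs more than the bare Graham--Sloane inequality. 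Your instinct that extra work is required there is correct.

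For the upper bound your endgame is sound: restricted to two adjacent levels, the containment bipartite graph of an $\NN$-free family is $P_4$-free, hence a disjoint union of stars, and a packing count produces the constant $2/n$. The real gap is the reduction step. A generic ``compress to two middle levels'' move does not obviously preserve $\NN$-freeness, since shifting a set to a new level can create new comparabilities and hence a new copy of $\NN$; and normalized matching by itself bounds antichains, not pattern-avoiding families. You rightly flag this as the obstacle, but as written the sketch is a heuristic rather than a proof, and the paper offers nothing to fill it in.
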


The construction for the lower bound of Theorem~\ref{thm:CD:JGGK} comes directly from a previous result of Katona and Tarj\'{a}n~\cite{KatonaTarjan} from 1983 on $\mathcal V$-free families. The poset $\mathcal V$ consists of three elements $X,Y,Z$ such that $Y\subset X$ and $Y\subset Z$. It is clear that $\La(n,{\mathcal V})\leq\La(n,\NN)$ because any $\mathcal V$-free family is also $\NN$-free.

To establish the lower bound, Katona and Tarj\'an used a constant-weight code construction due to Graham and Sloane~\cite{GrahamSloane} from 1980.  In the proof of Theorem~\ref{thm:CD:lowerboundnfree}, we obtain a lower bound that appears to be larger than the current known bound. However, whether it is an improvement depends on the behavior of some functions well-known in coding theory. In order to discuss our results we need some brief coding theory background. 

\subsection{Coding Theory Background}
Let $A(n,2\delta,k)$ denote the size of the largest family of $\{0,1\}$-vectors of length $n$ such that each vector has exactly $k$ ones and the Hamming distance between any pair of distinct vectors is at least $2\delta$.  This is the same as the size of the largest family of subsets of $[n]$ such that each subset has size exactly $k$ and the symmetric difference of any pair of distinct sets is at least $2\delta$.

The quantity $A(n,2\delta,k)$ is important in the field of error-correcting codes. In fact, $A(n,4,k)$ computes the size of a single-error-correcting code with constant weight $k$. Henceforth, we will use ``SEC code'' as shorthand for ``single-error-correcting code.''

The first nontrivial value of $\delta$ for $A(n,2\delta,k)$ is $\delta=2$. Graham and Sloane~\cite{GrahamSloane} give a lower bound construction for $A(n, 4, k)$. 

\begin{theorem} [Graham and Sloane~\cite{GrahamSloane}]\label{thm:CD:RGNS} 
	$A(n,4,k)\geq \frac{1}{n}{n \choose k}.$
\end{theorem}

\subsection{Main Result}

Katona and Tarj\'{a}n~\cite{KatonaTarjan}  estimated the following lower bound for $\NN$-free families. 

\begin{theorem}\label{thm:CD:lowerboundKT} Let $k=\lfloor n/2\rfloor$. Then,
	\begin{align*}
		\La(n,\NN)\geq {n \choose k}+A(n,4,k+1) .
	\end{align*}
\end{theorem}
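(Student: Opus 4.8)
The plan is to prove the bound by explicit construction: I will build an $\NN$-free family $\F\subseteq\B_n$ whose size is exactly $\binom{n}{k}+A(n,4,k+1)$. The family uses only the two middle-most levels. Let $\mathcal{C}\subseteq\binom{[n]}{k+1}$ be a maximum constant-weight SEC code of weight $k+1$, so that $\lvert\mathcal{C}\rvert=A(n,4,k+1)$ and any two distinct members of $\mathcal{C}$ have symmetric difference at least $4$. I then set
\[
  \F=\binom{[n]}{k}\cup\mathcal{C}.
\]
Since these two levels are disjoint, $\lvert\F\rvert=\binom{n}{k}+A(n,4,k+1)$, and everything reduces to checking that $\F$ contains no copy of $\NN$.

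To verify $\NN$-freeness I would argue by contradiction, assuming that $\F$ contains four distinct sets $W,X,Y,Z$ with $W\subset X$, $Y\subset X$, and $Y\subset Z$. The decisive structural feature is that every member of $\F$ has size $k$ or $k+1$. First I would locate the four sets among these two levels. Since $Y$ is a strict subset of both $X$ and $Z$, its size is strictly smaller than both $\lvert X\rvert$ and $\lvert Z\rvert$; as the only available sizes are $k$ and $k+1$, this forces $\lvert Y\rvert=k$ and $\lvert X\rvert=\lvert Z\rvert=k+1$. In particular $X$ and $Z$ are distinct members of $\mathcal{C}$.

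The contradiction now comes directly from the code distance. Both $X$ and $Z$ are $(k+1)$-sets containing the common $k$-set $Y$, so $X=Y\cup\{a\}$ and $Z=Y\cup\{b\}$ for some $a\neq b$, giving $X\symmdiff Z=\{a,b\}$ of size $2$. This violates the defining property that distinct codewords of $\mathcal{C}$ differ by at least $4$. Hence $\F$ is $\NN$-free (indeed the argument never used $W$, so $\F$ is even $\mathcal V$-free, consistent with the fact that any $\mathcal V$-free family is $\NN$-free), and therefore $\La(n,\NN)\geq\lvert\F\rvert=\binom{n}{k}+A(n,4,k+1)$.

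There is no hard estimation here; the entire content lies in choosing the right construction. The one point I would emphasize is that the Hamming distance $4$ (equivalently, minimum symmetric difference $4$) is exactly calibrated for the task: a minimum symmetric difference of only $2$ would allow two codewords to cap a common $k$-subset $Y$, which---together with any other $k$-subset $W\subset X$, always available since $X$ has $k+1\geq2$ subsets of size $k$---would reconstruct the forbidden $\NN$. Thus the SEC property is precisely what is needed to destroy every potential copy of $\NN$, and this is the step I would present most carefully.
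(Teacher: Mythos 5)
Your proposal is correct and is essentially the construction the paper attributes to Katona and Tarj\'an: the full middle level $\binom{[n]}{k}$ together with a maximum weight-$(k+1)$ SEC code, which is $\mathcal V$-free (no $k$-set lies below two codewords without forcing symmetric difference $2$) and hence $\NN$-free. Your verification, including the observation that $W$ plays no role, matches the intended argument.
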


The following theorem is our main result of the note. 
\begin{theorem}\label{thm:CD:lowerboundnfree} Let $n$ be even and let $k=n/2$. Then,
	\begin{align}
		\La(n, \NN) \geq {n\choose k}+A(n, 4, k) . \label{eq:secondbound}
	\end{align}

\end{theorem}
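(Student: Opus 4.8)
The plan is to realize the extra $A(n,4,k)$ sets not as a code sitting one level above the middle layer (as in Theorem~\ref{thm:CD:lowerboundKT}), but as a single weight-$k$ code whose codewords are \emph{split} between levels $k+1$ and $k-1$ by toggling one fixed coordinate. First I would fix a maximum constant-weight-$k$ code $\mathcal{C}\subseteq\binom{[n]}{k}$ with pairwise symmetric difference at least $4$, so that $|\mathcal{C}|=A(n,4,k)$. Fixing a ground-set element $z\in[n]$, I split $\mathcal{C}=\mathcal{C}_0\cup\mathcal{C}_1$ according to whether a codeword misses or contains $z$, and set $\mathcal{U}=\{C\cup\{z\}:C\in\mathcal{C}_0\}\subseteq\binom{[n]}{k+1}$ and $\mathcal{L}=\{C\setminus\{z\}:C\in\mathcal{C}_1\}\subseteq\binom{[n]}{k-1}$. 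The proposed family is $\F=\binom{[n]}{k}\cup\mathcal{U}\cup\mathcal{L}$. Since $\mathcal{U}$, $\mathcal{L}$, and the middle layer occupy three different levels and the two shift maps are injective, $|\F|=\binom{n}{k}+|\mathcal{C}_0|+|\mathcal{C}_1|=\binom{n}{k}+A(n,4,k)$, which is exactly the claimed size.

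It then remains to verify that $\F$ is $\NN$-free, and I would reduce this to three properties of the shifted sets: (i) any two members of $\mathcal{U}$ have symmetric difference at least $4$; (ii) any two members of $\mathcal{L}$ have symmetric difference at least $4$; and (iii) no member of $\mathcal{L}$ is contained in a member of $\mathcal{U}$. Properties (i) and (ii) are immediate, since adding (respectively removing) the common element $z$ to (from) two codewords leaves their symmetric difference unchanged, so the minimum distance is inherited from $\mathcal{C}$. Property (iii) is where the single-coordinate trick pays off: if $C\setminus\{z\}\subseteq C'\cup\{z\}$ with $z\in C$ and $z\notin C'$, then $C\setminus\{z\}\subseteq C'$, which forces $C\symmdiff C'=\{z,a\}$ for a single element $a$, contradicting that $\mathcal{C}$ has minimum distance $4$.

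The heart of the argument, and the step I expect to be most delicate, is showing that (i)--(iii) forbid every copy of $\NN$. Here I would exploit that the middle layer is complete. In any embedded pattern $W\subset X$, $Y\subset X$, $Y\subset Z$, the set $X$ has two sets strictly below it, so $|X|\in\{k,k+1\}$, while $Y$ lies strictly below two distinct sets. Casing on the levels of $X$ and $Y$ collapses the possibilities to exactly three obstructions: two sets of $\mathcal{U}$ sharing a common $k$-subset $Y$ (excluded by (i)); two sets of $\mathcal{L}$ contained in a common $k$-set $X$, hence at distance $2$ (excluded by (ii)); and an $\mathcal{L}$-set $Y$ contained in a $\mathcal{U}$-set $X$ (excluded by (iii)). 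The completeness of the middle layer is precisely what guarantees that the two remaining sets $W$ and $Z$ of the pattern always exist, so these are the only ways an $\NN$ could arise, and ruling them out completes the proof. I would impose $k=n/2$ only at the end, to maximize the dominant term $\binom{n}{k}$; the construction and the $\NN$-free verification themselves go through for every $k$ with $1\le k\le n-1$.
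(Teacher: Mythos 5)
Your proposal is correct and takes essentially the same approach as the paper: fix one ground-set element, lift the codewords missing it to level $k+1$ and drop the codewords containing it to level $k-1$, and rule out $\NN$ via the same three facts (minimum distance is preserved in each shifted code, and no shifted-down set lies below a shifted-up set). Your $\mathcal{U}$ and $\mathcal{L}$ are exactly the paper's $\cu$ and $\cd$ (with its fixed element $i$ playing the role of your $z$), and your case analysis on the levels of $X$ and $Y$ mirrors the paper's verification.
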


\begin{remark} 
	This is potentially an improvement when $n$  is even. We note that the same 3-level construction works for $n$ odd and $k=(n-1)/2$. This gives a family of size ${n\choose k}+A(n,4,k)$ nontrivially in three layers. However, since $A(n,4,k)=A(n,4,k+1)$ in the odd case, this does not provide an improvement to the known bounds.
	
	We believe that, for $n\geq 6$, the quantity $A(n,4,k)$ is strictly unimodal as a function of $k$ as long as $3\leq k\leq n-3$. This strict unimodality has been established~\cite{BSSS} for $6\leq n\leq 12$ and known bounds suggest that it is the case for larger values of $n$ as well. If unimodality holds, then $A(n,4,k)$ would achieve its maximum uniquely at $k=\lfloor n/2\rfloor$ or $k=\lceil n/2\rceil$. Therefore, we expect (\ref{eq:secondbound}) to also be a strict improvement over Theorem~\ref{thm:CD:lowerboundKT} in the case where $n$ is even. However, to our knowledge, the unimodality of $A(n,4,k)$ has never been established and seems to be a highly nontrivial problem.

\end{remark}

%%%%%%%%%%%%%%%%%%%%%%%%%%%%%%%%%%%%%%%%%%%%%%%%%%%%%%%%%%%%%%%%%%%%%%%%%%%%%%%%%%%%%%%%%%%%%%%%%%
\noindent{\bf{Proof of Theorem 4.}}

Given $k=n/2$, let $C$ be a constant weight SEC code of size $A(n,4,k)$. Define $\cu:=\{c\cup\{i\}: c\in C, \hspace{2px}  i\notin c\}$ and $\cd:=\{c-\{i\}: c\in C, \hspace{2px} i\in c\}$. Claim~\ref{cl:CD:cupcdown} gives some important properties of $\cu\cup\cd$.

\begin{claim}\label{cl:CD:cupcdown}~
	\begin{enumerate}[(i)]
		\item Both $\cu$ and $\cd$ are SEC codes with constant weight $k+1$ and $k-1$, respectively. \label{it:cucdcodes}
		\item If $c'' \in \cu$ and $c' \in \cd$, $c' \not\subseteq c''$. \label{it:cdnotincu}	
		\end{enumerate}
\end{claim}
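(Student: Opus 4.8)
The plan is to derive both parts from a single hypothesis---that $C$ has minimum Hamming distance at least $4$---together with the fact that every codeword has the same weight $k$. The constant-weight assertions are immediate from the definitions, since each $c\cup\{i\}$ has weight $k+1$ and each $c-\{i\}$ has weight $k-1$; the substance lies in the distance and non-containment statements. The organizing principle I would use throughout is that inserting or deleting a single coordinate changes a symmetric difference by at most one, so the distance-$4$ hypothesis on $C$ leaves just enough room to control the perturbed families. As a first step I would record the ``unique decoding'' consequence of minimum distance $4$: a weight-$(k+1)$ set can sit above at most one codeword, because two distinct weight-$k$ subsets of a common $(k+1)$-set differ in exactly two coordinates, which is forbidden; dually, a weight-$(k-1)$ set lies below at most one codeword. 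This shows each element of $\cu$ (resp. $\cd$) has a well-defined parent codeword and that the defining maps are injective.

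For the distance part of (\ref{it:cucdcodes}) I would take two elements arising from distinct codewords $c_1\neq c_2$, expand their symmetric difference in terms of $c_1\symmdiff c_2$ and the one or two perturbing coordinates, and verify that $|c_1\symmdiff c_2|\ge 4$ survives the perturbation; the only way the perturbed distance can drop below $4$ is the saturated case $|c_1\symmdiff c_2|=4$ in which the inserted or deleted coordinates happen to align with $c_1\symmdiff c_2$, and this I would exclude using the prescription for which coordinate is attached to each codeword. For (\ref{it:cdnotincu}), taking $c'=c_1-\{j\}$ and $c''=c_2\cup\{i\}$ from distinct parents, I would argue by contradiction from $c'\subseteq c''$: since $c'$ has weight $k-1$ and $c''$ weight $k+1$, the containment says all $k-1$ elements of $c_1-\{j\}$ lie in $c_2\cup\{i\}$. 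Counting the elements of $c_1$ that avoid $c_2$ then forces $c_1\setminus c_2\subseteq\{j,i\}$, hence $|c_1\symmdiff c_2|\le 4$, and pushing the bookkeeping one step further pins $c_1\setminus c_2$ down to exactly $\{j,i\}$, again contradicting the minimum distance (or, at distance exactly $4$, the choice of $i$ and $j$).

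The step I expect to be the main obstacle is precisely this tight boundary analysis in both parts: the distance-$4$ hypothesis is exactly saturated, so a single coordinate change can in principle collapse a distance to $2$ or manufacture a containment, and ruling these degeneracies out is where one must use more than the bare minimum-distance bound---specifically the constant weight $k=n/2$ and the rule governing which coordinate is added to or removed from each codeword. Once the extremal distance-$4$ configurations are handled, everything else reduces to routine symmetric-difference accounting.
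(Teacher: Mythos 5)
Your outline has the right symmetric-difference bookkeeping, but it leaves unproved exactly the point on which the whole claim turns: the rule that determines which coordinate is attached to (or deleted from) each codeword. You correctly observe that the distance-$4$ hypothesis on $C$ is saturated, so both the distance bound in (\ref{it:cucdcodes}) and the non-containment in (\ref{it:cdnotincu}) genuinely fail for a bad choice of perturbing coordinates: if $|c_1\symmdiff c_2|=4$ with $i_1\in c_2\setminus c_1$ and $i_2\in c_1\setminus c_2$, then $c_1\cup\{i_1\}$ and $c_2\cup\{i_2\}$ are at distance $2$; and if the same codeword $c$ were allowed to contribute both $c\cup\{i\}$ to $\cu$ and $c-\{j\}$ to $\cd$, then $c-\{j\}\subset c\cup\{i\}$ kills (\ref{it:cdnotincu}) outright (your phrase ``from distinct parents'' assumes this away without justification). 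At each of these junctures you defer to ``the prescription for which coordinate is attached to each codeword'' or ``the choice of $i$ and $j$,'' but you never state what that prescription is, and the argument cannot be completed until it is specified --- the claim is simply false for an arbitrary per-codeword choice. Your ``unique decoding'' observation (each child has at most one parent) does not help here; what is needed is that each parent has at most one child and is pushed in only one direction.

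The construction the paper intends --- as its proof and the size count $|\cu|+|\cd|=|C|$ make clear --- is that $i$ is a single \emph{fixed} element of $[n]$: codewords not containing $i$ are pushed up to $c\cup\{i\}$, and codewords containing $i$ are pushed down to $c-\{i\}$. With this rule every degeneracy you worry about vanishes at once and no boundary analysis is needed: two members of $\cu$ both contain the same $i$, so $|c_1\symmdiff c_2|=\left|(c_1-\{i\})\symmdiff(c_2-\{i\})\right|\geq 4$ with no loss at all; the parents in (\ref{it:cdnotincu}) are automatically distinct because one contains $i$ and the other does not; and in your saturated case for (\ref{it:cdnotincu}) the deleted and inserted coordinates coincide, so $c_1\setminus c_2$ would be the singleton $\{i\}$ rather than a $2$-set, contradicting $|c_1\symmdiff c_2|\geq 4$ immediately. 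So the ``main obstacle'' you identify is an artifact of leaving the coordinate-selection rule unspecified rather than a feature of the problem, and as written your proposal does not constitute a proof of the claim.
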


\begin{proof}
	{\bf\it (\ref{it:cucdcodes}).} Let $c_1, c_2\in \cu$. Then $\left|c_1\symmdiff c_2\right|=\left|(c_1-\{i\})\symmdiff (c_2-\{i\})\right|\geq 4$ since $(c_1-\{i\}),  (c_2-\{i\})\in C$ and their symmetric difference must be at least $4$ in order for $C$ to be a $1$-EC code. Thus, $\cu$ is a SEC code. By a similar argument, $\cd$ is a SEC code. 
	
	{\bf\it (\ref{it:cdnotincu}).} Let $c'' \in \cu$,  $c' \in \cd$, and $c' \subset c''$. Then, $(c' \cup \{i\}), (c'' - \{i\}) \in C$. So, $\left|(c'' - \{i\})\symmdiff (c' \cup \{i\})\right|\geq 4$. This implies that there are two members of $[n]$ that are in $(c' \cup \{i\})-(c'' - \{i\})$. One is $i$ and the other is some $j\in c'-c''$, which contradicts the assumption that $c'\subset c''$. This concludes the proof of Claim~\ref{cl:CD:cupcdown}.
\end{proof}

In order to finish the proof, we just need to show that the family ${\mathcal F}:={[n] \choose k}\cup \cu \cup \cd$ is $\NN$-free. 

To that end, suppose there is a subposet $\NN$ with elements $W,X,Y,Z$ where $W\subset X$, $Y\subset X$ and $Y\subset Z$ (see Figure~\ref{fig:npic}). Where is the element $X$?

We know that $X \not\in \cd$ because it has to have elements below it and the elements of $\cd$ are all minimal in $\mathcal{F}$. We know that $X\not\in {[n]\choose k}$ because that would force $W,Y \in \cd$ and, being subsets of $X$ would require $|W\symmdiff Y|=2$, a contradiction to $\cd$ being a SEC code. Therefore, $X \in \cu$.

Now, where is $Y$? We know that $Y \not\in \cu$ because $Y\subset X$. We know $Y\not\in {[n]\choose k}$ because that would force $X,Z \in \cu$ and thus would force $|X\symmdiff Z|=2$, this is a contradiction to the fact that $\cu$ is a SEC code. Therefore, $Y \in \cd$.

In order for the copy of $\NN$ to exist, $Y\subset X$, which implies $Y\subset X-\{i\}$ and so $\left|(Y\cup\{i\})\symmdiff (X-\{i\})\right|=2$. Recall, however, that $Y\cup\{i\}$ and $X-\{i\}$ are distinct members of $C$ and so have symmetric difference at least 4, a contradiction.
\qed

%%%%%%%%%%%%%%%%%%%%%%%%%%%%%%%%%%%%%%%%%%%%%%%%%%%%%%%
\subsection*{Acknowledgements}
We would like to extend our thanks to Kirsten Hogenson and Sung-Yell Song for providing helpful conversations.

%%%%%%%%%%%%%%%%%%%%%%%%%%%%%%%%%%%%%%%%%%%%%%%%%%%%%%%


\begin{thebibliography}{10}
\bibitem{BSSS} A. E. Brouwer, J. A. Shearer, N. J. A. Sloane, and W. D. Smith. {A New Table of Constant Weight Codes}. \textit{IEEE Trans. Inform. Theory} \textbf{26 (6)} (1990), pp. 1334--1380.
\bibitem{Erdos} P. Erd\H{o}s. {On a lemma of Littlewood and Offord}. \textit{Bull. Amer. Math. Soc.} \textbf{51} (1945), pp. 898--902. 
\bibitem{GrahamSloane} R. L. Graham, and N. J. A. Sloane. {Lower Bounds for Constant Weight Codes}. \textit{IEEE Trans. Inform. Theory} \textbf{26} (1980), pp. 37--43. 
\bibitem{GriggsLi} J. R. Griggs and W.-T. Li.  {Progress on poset-free families of subsets} \textit{The IMA Volumes in Math. and its  Appl.} \textbf{159} (2016), pp. 317--338.
\bibitem{GriggsKatona} J. R. Griggs and G. O. H. Katona. {No four subsets forming an $N$}. \textit{J. Combin. Theory Ser. A} \textbf{115} (2008), pp. 677--685. 
\bibitem{KatonaTarjan} G. O. H. Katona and T. G. Tarj\'{a}n. {Extremal problems with excluded subgraphs in the $n$-cube}. \textit{Lecture Notes in Math, Springer, Berlin} \textbf{1018} (1983), pp. 84--93.
\bibitem{Sperner} E. Sperner, {Ein Satz \"{u}ber Utermegen einer endlichen Menge} \textit{Math. Z.} \textbf{27} (1928), pp. 544--548. 

\end{thebibliography}
\end{document}